\newtheorem{theorem}{Theorem}
\newtheorem{corollary}[theorem]{Corollary}
\newtheorem{proposition}[theorem]{Proposition}
\newtheorem{remark}[theorem]{Remark}
\newenvironment{proof}[1][\noindent Proof]{\textbf{#1} }{\hfill \ \rule{0.5em}{0.5em}}
\def\K{\mathcal{K}}
\def\bin#1,#2{#1\choose#2}
\def\nn{\mathbb{N}}
\title{\bf Fixed Point Theorems for Set-Valued Mappings on TVS-Cone Metric Spaces}
\author{Ra\'ul Fierro$^{1,2}$
\\[1ex]
{\footnotesize $^{1}$Instituto de Matem\'atica. Pontificia Universidad Cat\'{o}lica de Valpara\'{\i}so.}\\[-0.5ex]
{\footnotesize $^{2}$Instituto de Matem\'aticas. Universidad de Valpara\'{\i}so.}}
\begin{document}
\maketitle




\noindent{\bf Abstract}
In the context of tvs-cone metric spaces, we prove a Bishop-Phelps and a Caristi's type theorem. These results allow us to prove a fixed point theorem for $(\delta, L)$-weak contraction according to a  pseudo Hausdorff metric defined by means of a cone metric.

\noindent{\bf Keywords and phrases} tvs-cone metric space; Bishop-Phelps type theorem, Caristi type theorem, Berinde weak contraction; set-valued mapping.

\noindent\textbf{2010 MSC} Primary 47H10; Secondary 47H04.


\section{Introduction}
Huang and Zhang in \cite{HZ07}, introduced the concept of cone metric space as a generalization of metric space. The most relevant of their work is that these authors gave an example of a contraction on a cone metric space, which is not contraction in a standard metric space. This fact makes it clear that the theory of metric spaces are not flexible enough for the fixed point theory, which it has prompted several authors to publish numerous works on fixed point theory for operators defined on cone metric spaces. Most of these are based in cone metrics taking values in a Banach space, and even, some of them suppose this space is normal, in the sense that this space has a base of neighborhood of zero consisting of order-convex subsets. The main aim of this paper is to provide results for set-valued mappings defined on a cone metric space, whose metric takes values in a quite general topological vector space, since it is only assumed this space is $\sigma$-order complete. In \cite{AK11} (see also, \cite{AR10}),  Agarwal and Khamsi proved a version of Caristi's theorem based in a Bishop-Phelps type result for a cone metric taking values in a Banach space. In this paper, we extend this result, which enables us to prove a more general version of Caristi's theorem for cone metric spaces. Natural consequences are deduced from this fact and, as an application, we prove existence of fixed point for an analogous weak contraction of set-valued mapping defined by Berinde and Berinde in \cite{BB07}, which, in our case, is defined according to a pseudo Hausdorff cone metric.

The paper is organized as follows. In Section 2 some preliminary definitions and facts are given, while in Section 3, Bishop-Phelps and Caristi's theorems are proved. Finally, Section 4 is devoted to an application  to set-valued weak contractions defined by means of a cone metric.
\section{Preliminaries}
Let $E$ be a  topological vector space with $\theta$ as zero element and  usual notations for addition and scalar product. A cone is a nonempty closed subset $P$ of $E$ such that $P\cap(-P)=\{\theta\}$ and for each $\lambda\geq0$, $\lambda P+P\subseteq P$.
Given a cone $P$ of $E$, a partial order is defined on $E$ as $x\preceq y$, if and only if, $y-x\in P$. We denote by $x\prec y$ whenever $x\preceq y$ and $x\neq y$. Moreover, the notations $x\ll y$ means that $y-x$ belongs to $\mathrm{int}(P)$, the interior of $P$. As natural, the notations $x\succeq y$, $x\succ y$ and $x\gg y$  mean $y\preceq x$, $y\prec x$ and $y\ll x$, respectively. In what follows, we assume $P$ is a cone of $E$ such that $E$ is a Riesz space, i.e.\ given $x,y\in E$, there exists the greatest lower bound (infimum) of $\{x,y\}$. Additionally, $E$ is assumed $\sigma$-order complete, which means that every decreasing bounded from below sequence has an infimum.  For notations and facts about ordered vector spaces, we refer to \cite{AT07}.

\begin{remark}\label{r1} For each $a,b,c\in E$ such that $a\preceq b \ll c$, we have $a\ll c$.
\end{remark}

A cone metric space is a pair $(X,d)$, where $X$ is a nonempty set and $d:X\times X\to E$ is a function satisfying the following two conditions: i) for all $x,y\in X$, $d(x,y)=\theta$, if and only if, $x=y$, and ii) for all $x,y,z\in X$, $d(x,y)\preceq d(x,z)+d(y,z)$.

In the sequel, $(X,d)$ stands for a cone metric space.

\begin{remark}\label{r2}
Note that for all $x,y\in X$, $d(x,y)\succeq\theta$, and $d(x,y)= d(y,x)$.
\end{remark}

Let $\{x_n\}_{n\in\nn}$ be a sequence in $X$ and $x\in X$. We say $\{x_n\}_{n\in\nn}$ converges to $x$, if and only if, for every $\epsilon\gg\theta$, there exists $N\in\nn$ such that, for any $n\geq\nn$, we have $d(x_n,x)\ll\epsilon$. The sequence $\{x_n\}_{n\in\nn}$  it said to be a  Cauchy sequence, if and only if, for every $\epsilon\gg\theta$, there exists $N\in\nn$ such that, for any $m,n\geq\nn$, we have $d(x_m,x_n)\ll\epsilon$. The cone metric space $(X,d)$ is said to be complete, if and only if, every Cauchy sequence in $X$ converges to some point $x\in X$.
A subset $F$ of $X$ is said to be closed, if for any sequence $\{x_n\}_{n\in\nn}$ in $F$ converging to $x\in E$, we have $x\in F$.

\begin{remark}\label{r3}
 If $X$ is complete and $F\subseteq X$ is closed, then $F$ is complete.
\end{remark}

Let $\varphi:X\to E$ be a function. We say $\varphi$ is lower semicontinuous, if and only if, for any $\alpha\in E$, the set $\{x\in X:\varphi(x)\preceq\alpha\}$ is closed. For this function, a Br{\o}nsted type order $\preceq_\varphi$ is defined on  $X$
as follows:
$$
x\preceq_\varphi y,\mbox{ if and only if, }d(x,y)\preceq \varphi(x)-\varphi(y).
$$
It is easy to see that $\preceq_\varphi$ is in effect an order relation on $X$.

In the sequel, $\mathcal{LS}(X)$ stands for the space of all lower semicontinuous and bounded below functions from $X$ to $E$.

\begin{remark}\label{r4}
The function $\varphi$ defining $\preceq_\varphi$  is non-increasing.
\end{remark}

\section{Bishop-Phelps and  Caristi type theorems}

The following theorem is an extension of the well-known results by Bishop-Phelps lemma  \cite{BP63}.

\begin{theorem}\label{t1}
Suppose $X$ is $d$-complete. Then, for each $\varphi\in\mathcal{LS}(X)$ and $x_0\in X$ there exists a maximal element $x^*\in X$ such that $x_0\preceq_\varphi x^*$.
\end{theorem}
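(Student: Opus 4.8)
The plan is to imitate the iterative proof of the Bishop--Phelps/Ekeland principle, working throughout with the partial order $\preceq$ and the relation $\ll$ and using $\sigma$-order completeness of $E$ in the role played by completeness of $\rr$ in the scalar case. First fix a ``gauge sequence'': since $\lambda\,\mathrm{int}(P)\subseteq\mathrm{int}(P)$ for $\lambda>0$, for any $\epsilon\gg\theta$ the elements $\epsilon_n:=2^{-n}\epsilon$ satisfy $\epsilon_n\gg\theta$ and $\epsilon_n\to\theta$ in $E$ (so $\epsilon_n\ll\epsilon'$ eventually for every $\epsilon'\gg\theta$). Two facts will be used repeatedly: (i) Remark~\ref{r1}, in contrapositive form ``$a\preceq b$ and $a\not\ll c$ imply $b\not\ll c$''; and (ii) for $\epsilon_n\gg\theta$ the set $\{k\epsilon_n:k\in\nn\}$ is not bounded above in $E$, since an upper bound $c$ would make $(c-k\epsilon_n)_{k}$ a decreasing, bounded-below sequence, whose infimum $w$ (which exists by $\sigma$-order completeness) would satisfy $w+\epsilon_n\preceq w$, i.e.\ $\epsilon_n\preceq\theta$, which is impossible.

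\textbf{Construction.} Put $x_0$ as given. Inductively, having $x_n$ with $x_0\preceq_\varphi x_n$, write $S_n=\{y\in X:x_n\preceq_\varphi y\}$; if $x_n$ is $\preceq_\varphi$-maximal we are done with $x^*=x_n$. Otherwise I produce $x_{n+1}\in S_n$ whose section is ``$\epsilon_{n+1}$-small'', that is $S_{n+1}:=\{y:x_{n+1}\preceq_\varphi y\}\subseteq\{y:d(x_{n+1},y)\ll\epsilon_{n+1}\}$, by a descent inside $S_n$: set $z_0=x_n$ and, as long as there is $w$ with $z_j\preceq_\varphi w$ and $d(z_j,w)\not\ll\epsilon_{n+1}$, let $z_{j+1}$ be such a $w$. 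Along the descent $\varphi(z_0)\succeq\varphi(z_1)\succeq\cdots$ by Remark~\ref{r4}, and these values are bounded below, so $\inf_j\varphi(z_j)$ exists; since $d(z_j,z_{j+1})\preceq\varphi(z_j)-\varphi(z_{j+1})$, fact~(i) turns $d(z_j,z_{j+1})\not\ll\epsilon_{n+1}$ into $\varphi(z_j)-\varphi(z_{j+1})\not\ll\epsilon_{n+1}$, and with fact~(ii) this forces the descent to stop after finitely many steps; its terminal point is $x_{n+1}$, which lies in $S_n$ (so $x_n\preceq_\varphi x_{n+1}$ and $S_{n+1}\subseteq S_n$) and satisfies $S_{n+1}\subseteq\{y:d(x_{n+1},y)\ll\epsilon_{n+1}\}$. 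If the process never terminates it produces a $\preceq_\varphi$-increasing sequence $(x_n)_{n\geq0}$.

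\textbf{Passing to the limit.} For $n\geq1$ and $m\geq n$ one has $x_m\in S_n\subseteq\{y:d(x_n,y)\ll\epsilon_n\}$, hence $d(x_n,x_m)\ll\epsilon_n$; since $\epsilon_n\to\theta$, $(x_n)$ is $d$-Cauchy and converges to some $x^*\in X$. As $d(x_n,\cdot)$ is continuous and $\varphi$ is lower semicontinuous, the map $z\mapsto\varphi(z)+d(x_n,z)$ has closed sublevel sets, so the inequalities $\varphi(x_m)+d(x_n,x_m)\preceq\varphi(x_n)$ (valid for $m\geq n$, since $x_n\preceq_\varphi x_m$) pass to the limit and give $x_n\preceq_\varphi x^*$ for all $n$; in particular $x_0\preceq_\varphi x^*$. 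Finally, if $x^*\preceq_\varphi y$ then $x_n\preceq_\varphi y$, so $y\in S_n$ and $d(x_n,y)\ll\epsilon_n$; also $d(x_n,x^*)\preceq\epsilon_n$ by letting $m\to\infty$ above. Then $d(x^*,y)\preceq d(x^*,x_n)+d(x_n,y)\preceq 2\epsilon_n$ for every $n$, and since $2\epsilon_n\to\theta$ and $P$ is closed, $d(x^*,y)=\theta$, i.e.\ $y=x^*$. Hence $x^*$ is maximal with $x_0\preceq_\varphi x^*$.

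\textbf{Main obstacle.} The delicate step, and the one that should absorb essentially all the work, is the termination of the descent in the Construction. In the scalar case ``$\varphi(z_j)-\varphi(z_{j+1})\not\ll\epsilon_{n+1}$'' would bound the successive drops below by a fixed positive quantity, so finiteness of the total drop would end the process; in the present order setting that inequality is much weaker than $\varphi(z_j)-\varphi(z_{j+1})\succeq\epsilon_{n+1}$, so termination must be extracted from $\sigma$-order completeness, the unboundedness of $\{k\epsilon_{n+1}\}$, and the compatibility between $\preceq$ and $\ll$ (Remark~\ref{r1}) alone. A secondary, more routine task is to check --- from lower semicontinuity of $\varphi$ and continuity of the cone metric --- that the sections $S_n$ and the sublevel sets used above are genuinely closed, so that the passages to the limit are legitimate.
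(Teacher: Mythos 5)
Your overall architecture (build a $\preceq_\varphi$-increasing sequence whose sections shrink, pass to the limit, deduce maximality) is reasonable, and the limit and maximality steps are essentially in order. But the proof has a genuine gap exactly where you locate it: the termination of the inner descent is asserted, not proven, and the two facts you propose to derive it from do not suffice. The obstruction is that the relation $a\not\ll\epsilon$ is not additive: in $E=\rr^2$ with $P=[0,\infty)^2$ and $\epsilon=(1,1)$, the elements $a_1=(1,0)$ and $a_2=(0,1)$ both fail to satisfy $a_i\ll\epsilon$, yet $a_1+a_2=\epsilon\ll 2\epsilon$. Consequently, from $\varphi(z_j)-\varphi(z_{j+1})\not\ll\epsilon_{n+1}$ for every $j$ you cannot conclude that the telescoping sum $\varphi(z_0)-\varphi(z_J)$ eventually escapes domination by, or dominates, $k\epsilon_{n+1}$; Remark \ref{r1} together with positivity of the summands only yields that the total drop is $\not\ll\epsilon_{n+1}$, which carries no information as $J\to\infty$, and your fact (ii) (unboundedness of $\{k\epsilon_{n+1}\}$) is never brought into contact with the sum. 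Since the individual drops have no uniform positive lower bound in the order $\preceq$, an infinite descent is not visibly contradictory, and so the construction of $x_{n+1}$ --- the heart of the proof --- is not established.

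For comparison, the paper never runs a descent and so never faces this issue: it chooses $x_n$ in the previous section $S(x_{n-1})$ so that $\varphi(x_n)\prec L_n+(1/2n)e$, where $L_n$ is the infimum of the values of $\varphi$ over (a chain in) that section, the infimum existing by $\sigma$-order completeness since $\varphi$ is non-increasing for $\preceq_\varphi$ and bounded below. Then for $m\geq n$ one has $\varphi(x_m)\succeq L_n$, so $d(x_n,x_m)\preceq\varphi(x_n)-\varphi(x_m)\preceq(1/2n)e\ll(1/n)e$ comes out directly as an order inequality, with no step-counting or termination argument at all. If you want to salvage your route, you would need to replace the stopping criterion ``$d(z_j,w)\not\ll\epsilon_{n+1}$'' by a condition that genuinely accumulates under addition, which in practice leads you back to near-minimization of $\varphi$ over the section, i.e.\ to the paper's construction.
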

\begin{proof}
For each $x\in X$, let $S(x)=\{y\in X:x\preceq_{\varphi} y\}$, $x_0\in X$  and  $C$ be a chain in $S(x_0)$.  Since
$S(x)=\{y\in X:\varphi(y)+d(x,y)\preceq \varphi(x)\}$, the lower semicontinuity of
$\varphi+d(x,\cdot)$ implies $S(x)$  is a closed set. Let $e\gg\theta$ and, inductively, define an increasing sequence $\{x_n\}_{n\in\nn}$
as
$$
x_{n}\in S(x_{n-1})\cap C\quad\mbox{with}\quad \varphi(x_{n})\prec (1/2n)e+L_n\ll (1/n)e+L_n,
$$
where $x_0$ is given, $A_n=\{\varphi(y): y\in S(x_{n-1})\cap C\}$ and $L_n=\inf(A_n)$.
Due to $\varphi$ is non-increasing and bounded below, $A_n$ is a chain in $P$ and consequently $\{x_n\}_{n\in\nn}$ is well defined. Moreover, for each $n,p\in\nn$, $x_{n+p}\in A_n$ and hence
$$
d(x_{n},x_{n+p})\preceq \varphi(x_n)-\varphi(x_{n+p})\ll (1/n)e.
$$
Thus, $\{x_n\}_{n\in\nn}$ is  a Cauchy
sequence and accordingly, there exists $x^*\in X$ such that this sequence converges to $x^*$. Since for each $n\in\nn$,
$S(x_n)$ is a closed set, we have $x^*\in S(x_n)$  and thus $x_0\preceq x_n\preceq x^*$. Suppose $y\in
X$ satisfies $x^*\preceq_\varphi y$. We have, for each $n\in\nn$, $d(x_n,y)\preceq
\varphi(x_n)-\varphi(y)\prec(1/n)e$ and hence $\lim_{n\to\infty}d(x_n,y)=0$. This fact implies that $x^*=y$  and therefore $x^*\in X$ is a
maximal element satisfying $x_0\preceq_\varphi x^*$. This concludes the proof.
\end{proof}

In the sequel, we denote by $2^X$ the family of all nonempty subsets of $X$ and by $B(X)$ the subfamily of $2^X$ consisting of all nonempty and bounded subsets of $X$. For a set-valued mapping $T:X\to2^X$ and $x\in X$, we usually denote $Tx$ instead of $T(x)$.

Theorem \ref{t1} enables us to state below a generalized  version of Caristi's theorem.
\begin{theorem}\label{t2}
Suppose $X$ is $d$-complete, $T:X\to 2^X$  is a set-valued mapping and $\varphi\in\mathcal{LS}(X)$. The following two propositions hold:
\begin{description}
\item[(\ref{t2}.1)] If for  each $x\in X$, there exists $y\in Tx$ such that $d(x,y)\preceq \varphi(x)-\varphi(y)$, then, there exists $x^*\in X$ such that $x^*\in Tx^*$.
\item[(\ref{t2}.2)] If for  each $x\in X$ and $y\in Tx$, $d(x,y)\preceq \varphi(x)-\varphi(y)$, then, there exists $x^*\in X$ such that $\{x^*\}= Tx^*$.
\end{description}
\end{theorem}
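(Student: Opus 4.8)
The plan is to derive both statements directly from Theorem~\ref{t1} applied to the Br{\o}nsted order $\preceq_\varphi$. First I would fix $x_0\in X$ arbitrarily and invoke Theorem~\ref{t1} to obtain a $\preceq_\varphi$-maximal element $x^*\in X$ with $x_0\preceq_\varphi x^*$; the point is that maximality of $x^*$ means there is no $y\neq x^*$ with $d(x^*,y)\preceq\varphi(x^*)-\varphi(y)$, i.e.\ no $y\neq x^*$ with $x^*\preceq_\varphi y$.

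For (\ref{t2}.1), the hypothesis gives some $y\in Tx^*$ with $d(x^*,y)\preceq\varphi(x^*)-\varphi(y)$, that is, $x^*\preceq_\varphi y$. By maximality of $x^*$ this forces $y=x^*$, hence $x^*\in Tx^*$. For (\ref{t2}.2), the stronger hypothesis says every $y\in Tx^*$ satisfies $x^*\preceq_\varphi y$; again maximality forces $y=x^*$ for each such $y$, so $Tx^*\subseteq\{x^*\}$, and since $Tx^*$ is nonempty (as $Tx^*\in 2^X$) we get $Tx^*=\{x^*\}$.

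There is essentially no analytic obstacle here: the entire content has already been absorbed into Theorem~\ref{t1}, and what remains is just unwinding the definition of $\preceq_\varphi$ and of maximality. The one small point worth stating carefully is the equivalence between "$x^*$ maximal for $\preceq_\varphi$" and "$d(x^*,y)\preceq\varphi(x^*)-\varphi(y)$ implies $y=x^*$", which is immediate from the definition of the order; and the observation that every value of $T$ is by convention a nonempty set, which is needed to conclude $Tx^*=\{x^*\}$ rather than merely $Tx^*\subseteq\{x^*\}$ in part (\ref{t2}.2).
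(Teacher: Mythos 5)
Your proposal is correct and follows essentially the same route as the paper: apply Theorem~\ref{t1} to get a $\preceq_\varphi$-maximal element $x^*$, then use maximality to force $y=x^*$ for the relevant $y\in Tx^*$, with nonemptiness of $Tx^*$ giving the equality in part (\ref{t2}.2). The only cosmetic difference is that the paper phrases the last step as combining (\ref{t2}.1) (giving $\{x^*\}\subseteq Tx^*$) with the reverse inclusion, while you obtain the reverse inclusion first and then invoke nonemptiness; the content is identical.
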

\begin{proof}
From Theorem \ref{t1}, $\preceq_{\varphi}$ has a maximal element $x^*\in X$. Suppose
there exists $y\in Tx^*$ such that $d(x^*,y)\preceq \varphi(x^*)-\varphi(y)$.
I.e.\ $x^*\preceq_\varphi y$. The
maximality of $x^*$ implies $y=x^*$ and hence (\ref{t2}.1) holds.

Since $Tx^*$ is nonempty, (\ref{t2}.1) implies  $\{x^*\}\subseteq Tx^*$. By applying assumption in (\ref{t2}.2) again and the maximality of $x^*$, we have $Tx^*\subseteq \{x^*\}$, which proves  (\ref{t2}.2) and the proof is complete.
\end{proof}

For single-valued mappings the following corollary holds.

\begin{corollary}\label{c1}
Suppose $X$ is $d$-complete. Let  $f:X\to X$ be a
mapping and $\varphi\in\mathcal{LS}(X)$  such that for each $x\in X$, $d(x,f(x))\preceq \varphi(x)-\varphi(f(x))$.
Then, there exists $x^*\in X$ such that
$x^*= f(x^*)$.
\end{corollary}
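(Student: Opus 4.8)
The plan is to deduce this immediately from Theorem \ref{t2}.1 by viewing the single-valued mapping $f$ as a particularly simple set-valued mapping. First I would define $T:X\to 2^X$ by $Tx=\{f(x)\}$ for each $x\in X$; this is a legitimate element of $2^X$ since every $Tx$ is nonempty. The hypothesis of the corollary states that for each $x\in X$ we have $d(x,f(x))\preceq\varphi(x)-\varphi(f(x))$, which is exactly the statement that there exists $y\in Tx$ (namely $y=f(x)$, the unique element of $Tx$) with $d(x,y)\preceq\varphi(x)-\varphi(y)$. Since $X$ is $d$-complete and $\varphi\in\mathcal{LS}(X)$, the hypotheses of Theorem \ref{t2}.1 are met.

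Applying Theorem \ref{t2}.1 therefore yields a point $x^*\in X$ with $x^*\in Tx^*$. But $Tx^*=\{f(x^*)\}$, so $x^*\in\{f(x^*)\}$ forces $x^*=f(x^*)$, which is the desired conclusion.

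There is essentially no obstacle here: the corollary is a direct specialization, and the only thing to check is that the single-valued contraction-type condition matches the ``there exists $y\in Tx$'' form of hypothesis (\ref{t2}.1), which it does trivially because the singleton $Tx$ has $f(x)$ as its only member. One could equally well invoke Theorem \ref{t1} directly: the hypothesis says $x\preceq_\varphi f(x)$ for every $x$, so a maximal element $x^*$ of $\preceq_\varphi$ satisfies $x^*\preceq_\varphi f(x^*)$, and maximality gives $f(x^*)=x^*$. Either route is a one-line argument.
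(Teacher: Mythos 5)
Your proposal is correct and matches the paper's (implicit) derivation: the paper states Corollary \ref{c1} without proof as an immediate specialization of Theorem \ref{t2}.1, which is exactly what you carry out by setting $Tx=\{f(x)\}$. Your alternative one-line route via Theorem \ref{t1} and maximality of $\preceq_\varphi$ is also valid and is in fact just an unwinding of the same argument.
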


A cone metric version of the nonconvex minimization theorem according to Takahashi \cite{Ta91} is stated as follows.

\begin{theorem}\label{t3}
Let $\varphi\in\mathcal{LS}(X)$ such that for any $x_0\in X$ satisfying $\inf_{x\in
X}\varphi(x)\prec \varphi(x_0)$, the following condition holds: there exists $x\in X\setminus\{x_0\}$ such that $d(x_0,x)\preceq \varphi(x_0)-\varphi(x)$.
Then, there exists $x^*\in X$ such that $\inf_{y\in X}\varphi(y)=\varphi(x^*)$.
\end{theorem}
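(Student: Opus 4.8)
The plan is to argue by contradiction using the maximal element produced by Theorem~\ref{t1}. Suppose that $\inf_{y\in X}\varphi(y) < \varphi(x)$ for every $x\in X$, i.e.\ the infimum of $\varphi$ is never attained. Applying Theorem~\ref{t1} to $\varphi\in\mathcal{LS}(X)$ with an arbitrary starting point $x_0\in X$, we obtain a maximal element $x^*\in X$ for the order $\preceq_\varphi$. Since $\inf_{y\in X}\varphi(y)\prec\varphi(x^*)$ by the contradiction hypothesis, the standing condition of the theorem applies at $x_0:=x^*$: there exists $x\in X\setminus\{x^*\}$ with $d(x^*,x)\preceq\varphi(x^*)-\varphi(x)$, which is precisely the statement $x^*\preceq_\varphi x$ with $x\neq x^*$. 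This contradicts the maximality of $x^*$, so the infimum must be attained at some point, which is the desired $x^*$.

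The one subtlety I would want to check carefully is the meaning of ``$\inf_{y\in X}\varphi(y)\prec\varphi(x^*)$'' and the logical negation of ``the infimum is attained.'' In a general $\sigma$-order complete Riesz space the infimum $m:=\inf_{y\in X}\varphi(y)$ need not exist a priori; however, $\varphi$ being bounded below and $X$ being (implicitly) such that $\{\varphi(y):y\in X\}$ has a lower bound, together with $\sigma$-order completeness, one has to be slightly careful whether the infimum over the whole (possibly uncountable) set $X$ exists. The clean reading---and the one I would adopt---is that the hypothesis ``for any $x_0$ with $\inf_{x\in X}\varphi(x)\prec\varphi(x_0)$\ldots'' is only invoked when this strict inequality genuinely holds, and the conclusion ``$\inf_{y\in X}\varphi(y)=\varphi(x^*)$'' asserts both that the infimum exists and equals $\varphi(x^*)$. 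Under the contradiction hypothesis that no point attains the infimum, for the maximal $x^*$ we cannot have $\varphi(x^*)\preceq\varphi(y)$ for all $y$ (else $\varphi(x^*)$ would be the infimum, attained), so there is some $y$ with $\varphi(x^*)\not\preceq\varphi(y)$; combined with the genuine strict-inequality case this feeds the hypothesis.

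The main obstacle, then, is not the contradiction argument---which is a one-line consequence of Theorem~\ref{t1}---but rather pinning down the order-theoretic bookkeeping so that the hypothesis is triggered correctly: specifically, verifying that under ``the infimum is not attained'' one indeed lands in the case $\inf_{x\in X}\varphi(x)\prec\varphi(x^*)$ rather than in an incomparability situation. I expect this to be handled by noting that a maximal element $x^*$ of $\preceq_\varphi$ that did not satisfy the strict inequality would force $\varphi(x^*)$ to be a lower bound of $\varphi(X)$, hence the infimum, hence attained---contradiction. Once that is dispatched, the proof closes immediately by invoking the theorem's hypothesis at $x^*$ and contradicting maximality.
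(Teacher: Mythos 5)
Your argument is correct and is essentially identical to the paper's proof: assume the infimum is nowhere attained, invoke Theorem~\ref{t1} to get a $\preceq_\varphi$-maximal $x^*$, apply the hypothesis at $x^*$ to produce $x\neq x^*$ with $x^*\preceq_\varphi x$, and contradict maximality. The subtlety you flag resolves immediately because $\prec$ here means $\preceq$ together with $\neq$, so ``infimum exists but is not attained at $x^*$'' gives $\inf_{y\in X}\varphi(y)\prec\varphi(x^*)$ with no incomparability issue.
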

\begin{proof}
Suppose for every $z\in X$, $\inf_{y\in X}\varphi(y)\prec\varphi(z)$ and let $x_0\in X$.
From Theorem \ref{t1}, $\preceq_{\varphi}$ has a maximal
element $x^*\in X$ such that $x_0 \preceq_{\varphi} x^*$. Since $\varphi$ is non-increasing,  $\varphi(x^*)\preceq
\varphi(x_0)$ and the assumption implies that there exists $x\in
X\setminus\{x^*\}$ such that $x^*\preceq_{\varphi} x$. From the maximality of
$x^*$ we have $x=x^*$, which is a contradiction. Therefore, there exists
$z\in X$ such that $\inf_{x\in X}\varphi(x)=\varphi(z)$, which completes the proof.
\end{proof}

\section{Contractions}
We define $H: B(X)\times B(X)\to E$ as
$$
    H(A,B)=\sup\{\sup_{x\in A}d(x,B),\sup_{y\in B}d(y,A)\},
$$
where for each $x\in X$ and a nonempty subset $A$ of $X$, $d(x,A)=\inf_{y\in A}d(x,y)$.
\begin{remark}\label{r5}
When $d$ is a standard metric on $X$, $H$ is the Hausdorff metric on $B(X)$. However, in general, $(B(X),H)$ is not a cone metric space.
\end{remark}

An linear operator $L:E\to E$ is said to be positive, if for any $x\in P$ we have $Lx\in P$.
Let $\K_+(E)$ be the set of all  positive, injective and continuous linear operators $\delta$ from $E$ into itself such that, there exists $0\leq t<1$ satisfying $0\preceq \delta x\preceq tx$, for all $x\in P$. Notice that for each $\delta\in\K_+(E)$ and $x\in E$, $|\delta x|\preceq \delta |x|$.

Following Berinde and Berinde in \cite{BB07}, a set-valued mapping
$T : X\to B(X) $ is called a  $(\delta, L)$-weak
contraction, if  there exist an positive linear operator $L:E\to E$ and $\delta \in\K_+(E)$ such that
\begin{equation}\label{C}
H(Tx,Ty) \preceq  \delta d(x,y) +Ld(y, T x), \quad\mbox{ for all } x,y\in X.
\end{equation}
Let $T:X\to B(X)$ be a set-valued mapping. We say $T$ is $H$-continuous at $x\in A$, if for any sequence $\{x_n\}_{n\in\nn}$ in $A$ converging to $x$, $\{H(Tx_n,Tx)\}_{n\in\nn}$ converges to $\theta$ in $E$.
The mapping $T$ is said to  be a contraction,  if there exists $k\in\K_+(E)$ such that for any $x,y\in X $, $H(Tx,Ty)\preceq kd(x,y)$. Notice that $T$ is a contraction, if and only if, there exists $0\leq t<1$ such that  for any $x,y\in X $, $H(Tx,Ty)\preceq td(x,y)$. When $E$ is a Banach space, $t$ can be chosen as the spectral ratio $\rho(k)$ of $k$ and hence in this case, $k$ is a contraction, if and only if, $\rho(k)<1$. Of course, any contraction is a weak contraction. A selector of $T$ is any function $f:X\to X$ such that $f(x)\in Tx$, for all $x\in X$. We say $T$ satisfies condition (S) if, for any  $\epsilon>0$, there exists a selector  $f_\epsilon$ of $T$ such that for each $x\in X$, $d(x,f_\epsilon(x))\preceq (1+\epsilon)d(x,Tx)$.

\begin{remark}\label{r6}
For $x\in X$ and $A,B\in B(X)$, it is defined $s(x,B)$ and $s(A,B)$ as follows:
$$
s(x,B)=\bigcup_{b\in B}\{\epsilon\succ\theta:d(x,b)\preceq\epsilon\}
$$
and
$$
s(A,B)=\bigcap_{a\in A}s(a,B)\cap\bigcap_{b\in B}s(b,A).
$$
Some authors such as \cite{AM13,AzEtAl13,CB11,MeEtAl15,ShEtAl12} define $k$-contraction as a set-valued
mapping $T:X\to B(X)$ satisfying
\begin{equation}\label{e1}
   kd(x,y)\in s(Tx,Ty),\qquad\mbox{for all }x,y\in X.
\end{equation}
This definition  is more restrictive than our definition of contraction  by making $L=0$ in \eqref{C}. Indeed, even though the functional $H$ is not properly a cone metric, it is easy to see that a set-valued mapping satisfying condition \eqref{e1}, it also satisfies our definition of contraction. Furthermore, condition $\theta\in s(a,A)$ implies $a\in A$ for all $a\in X$ and $A\subseteq X$, even though $A$ is not closed. However, it is not possible to conclude that $a\in A$, if $d(a,A)=0$, even though $A$ is closed. Consequently, condition \eqref{e1} is stronger than our definition of contraction.
\end{remark}

Given a set-valued mapping $T:X\to B(X)$, we denote by $\varphi_T$ the mapping from $X$ to $E$ defined as $\varphi_T(x)=d(x,Tx)$.

\begin{proposition}\label{p1} Let $T:X \to B(X)$ be a $H$-continuous set-valued mapping. Then, $\varphi_T\in\mathcal{LS}(X)$.
\end{proposition}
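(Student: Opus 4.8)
First I would dispose of the easy half: since $\varphi_T(x)=d(x,Tx)=\inf_{y\in Tx}d(x,y)\succeq\theta$ by Remark~\ref{r2}, the function $\varphi_T$ is bounded below (by $\theta$). Hence everything reduces to lower semicontinuity of $\varphi_T$: fixing $\alpha\in E$, I must show that $F_\alpha=\{x\in X:\varphi_T(x)\preceq\alpha\}$ is closed, i.e.\ that $x_n\to x$ with $x_n\in F_\alpha$ for every $n$ forces $x\in F_\alpha$.

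The heart of the argument is the triangle-type estimate
$$
\varphi_T(x)\preceq d(x,x_n)+\varphi_T(x_n)+H(Tx_n,Tx)\qquad\text{for all }n\in\nn.
$$
To obtain it, for arbitrary $z\in Tx$ and $y\in Tx_n$ the triangle inequality together with symmetry (Remark~\ref{r2}) gives $d(x,z)\preceq d(x,x_n)+d(x_n,y)+d(y,z)$. Taking the infimum over $z\in Tx$ — legitimate because $d(x,x_n)+d(x_n,y)$ is independent of $z$ and the remaining right-hand side is a translate of $\inf_{z\in Tx}d(y,z)=d(y,Tx)$ — yields $d(x,Tx)\preceq d(x,x_n)+d(x_n,y)+d(y,Tx)$. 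Since $y\in Tx_n$, the definition of $H$ gives $d(y,Tx)\preceq\sup_{a\in Tx_n}d(a,Tx)\preceq H(Tx_n,Tx)$, and taking the infimum over $y\in Tx_n$ (using $\inf_{y\in Tx_n}d(x_n,y)=\varphi_T(x_n)$) produces the displayed inequality. Combined with the hypothesis $\varphi_T(x_n)\preceq\alpha$, this gives $\varphi_T(x)\preceq d(x,x_n)+\alpha+H(Tx_n,Tx)$ for every $n$.

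Finally I would let $n\to\infty$ by the standard cone-metric device. Let $c\gg\theta$ be arbitrary. Since $x_n\to x$ we have $d(x,x_n)=d(x_n,x)\ll c/2$ for $n$ large, and since $T$ is $H$-continuous at $x$ we have $H(Tx_n,Tx)\ll c/2$ for $n$ large; for such $n$, $d(x,x_n)+H(Tx_n,Tx)\ll c$ (as $\mathrm{int}(P)+\mathrm{int}(P)\subseteq\mathrm{int}(P)$), so by Remark~\ref{r1}, $\varphi_T(x)\preceq d(x,x_n)+H(Tx_n,Tx)+\alpha\ll c+\alpha$, whence $\varphi_T(x)-\alpha\preceq c$. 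As $c\gg\theta$ was arbitrary, fix $e\gg\theta$ and apply this to $c=e/m$ (which satisfies $c\gg\theta$): then $e/m-(\varphi_T(x)-\alpha)\in P$ for every $m\in\nn$, and letting $m\to\infty$, using continuity of scalar multiplication ($e/m\to\theta$ in $E$) and closedness of $P$, we obtain $\alpha-\varphi_T(x)\in P$, that is $\varphi_T(x)\preceq\alpha$. Thus $x\in F_\alpha$, so $F_\alpha$ is closed and $\varphi_T$ is lower semicontinuous.

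The step I expect to be most delicate is this last passage to the limit: the order $\preceq$ is not preserved under naive limits, so the argument genuinely needs the reformulation ``an element that is $\preceq c$ for every $c\gg\theta$ is $\preceq\theta$'', together with closedness of $P$ and continuity of scalar multiplication. One must also take some care in translating the convergences $x_n\to x$ and $H(Tx_n,Tx)\to\theta$ into the relations $d(x,x_n)\ll c/2$ and $H(Tx_n,Tx)\ll c/2$ valid for large $n$, noting that $c/2-\mathrm{int}(P)$ is an open neighbourhood of $\theta$. A minor bookkeeping nuisance, which I would handle using only translation invariance of infima and the paper's definitions of $d(\cdot,A)$ and $H$, is the manipulation of infima and suprema of subsets of the merely $\sigma$-order complete space $E$.
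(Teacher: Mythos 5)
Your proof is correct and follows essentially the same route as the paper: both hinge on the triangle-type estimate $\varphi_T(u)\preceq\varphi_T(v)+d(u,v)+H(Tu,Tv)$, obtained exactly as you do by bounding $d(u,Tu)\preceq d(u,v)+d(v,y)+d(y,Tu)$ for $y\in Tv$ and passing to infima. The paper simply leaves the final limit passage (which you carry out carefully via $c\gg\theta$, closedness of $P$, and continuity of scalar multiplication) as an unstated routine step.
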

\begin{proof}
Let $u,v\in X$ and $y\in Tv$. Hence,
$$
\begin{array}{ccl}
 d(u,Tu) & \preceq & d(u,v)+d(v,y)+d(y,Tu)\\
   &\preceq & d(u,v)+d(v,y)+H(Tv,Tu).\\
\end{array}
$$

Consequently,
$
  \varphi_T(u)\preceq\varphi_T(v)+d(u,v)+H(Tu,Tv)
$
and from this, the lower semicontinuity of $\varphi_T$ is obtained.
\end{proof}

\begin{corollary}\label{c2}
Let $T:X \to B(X)$ be a contraction. Then, $\varphi_T\in\mathcal{LS}(X)$.
\end{corollary}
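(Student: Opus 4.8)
The plan is to reduce the claim to Proposition \ref{p1}: it is enough to show that every contraction $T\colon X\to B(X)$ is $H$-continuous, since the remaining requirement for membership in $\mathcal{LS}(X)$ — boundedness below — is automatic. Indeed, $\varphi_T(x)=d(x,Tx)=\inf_{y\in Tx}d(x,y)\succeq\theta$ for every $x\in X$, because each $d(x,y)\succeq\theta$ by Remark \ref{r2}, so $\theta$ is a lower bound of $\varphi_T$.

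To prove $H$-continuity, fix $x\in X$ and a sequence $\{x_n\}_{n\in\nn}$ converging to $x$. Observe first that $H(A,B)\succeq d(a,B)\succeq\theta$ for all $A,B\in B(X)$ and any $a\in A$. As $T$ is a contraction there is $0\le t<1$ with
$$
\theta\preceq H(Tx_n,Tx)\preceq t\,d(x_n,x)\qquad\text{for every }n\in\nn .
$$
If $t=0$ this forces $H(Tx_n,Tx)=\theta$ for all $n$, and we are done; so assume $t>0$. Let $\epsilon\gg\theta$; then $\tfrac1t\epsilon\gg\theta$ as well, so the convergence $x_n\to x$ provides $N\in\nn$ with $d(x_n,x)\ll\tfrac1t\epsilon$, hence $t\,d(x_n,x)\ll\epsilon$, for all $n\ge N$. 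Then $H(Tx_n,Tx)\preceq t\,d(x_n,x)\ll\epsilon$, and Remark \ref{r1} yields $H(Tx_n,Tx)\ll\epsilon$ for $n\ge N$. Since $\epsilon\gg\theta$ was arbitrary and $H(Tx_n,Tx)\succeq\theta$, the sequence $\{H(Tx_n,Tx)\}_{n\in\nn}$ converges to $\theta$; thus $T$ is $H$-continuous, and Proposition \ref{p1} gives $\varphi_T\in\mathcal{LS}(X)$.

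The argument is short, and its only delicate point is the meaning of ``converges to $\theta$ in $E$'' in the definition of $H$-continuity: here it should be read in the $\ll$-sense, exactly as the convergence of sequences in $X$ is defined, and with that reading the display above is precisely what was verified. No normality or convexity property of $E$ enters, in keeping with the standing hypotheses; the substantive content is simply the contraction estimate fed into the Lipschitz-type inequality $\varphi_T(u)\preceq\varphi_T(v)+d(u,v)+H(Tu,Tv)$ already obtained in the proof of Proposition \ref{p1}. If one did insist on a topological reading of the convergence, one would additionally need an argument transferring the order sandwich to the topology of $E$ — the one genuinely nontrivial spot, and the place I would expect to have to be most careful.
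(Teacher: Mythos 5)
Your argument is correct and is exactly the route the paper intends: the corollary is stated without proof immediately after Proposition \ref{p1}, the implicit justification being that a contraction is $H$-continuous (via $\theta\preceq H(Tx_n,Tx)\preceq t\,d(x_n,x)$) and that $\varphi_T\succeq\theta$ is bounded below. Your parenthetical caveat about the meaning of ``converges to $\theta$ in $E$'' is well taken, but the $\ll$-reading you adopt is consistent with the paper's own usage (e.g.\ the step $d(x_n,y)\prec(1/n)e$ implying $\lim_n d(x_n,y)=0$ in the proof of Theorem \ref{t1}), so no further work is expected of you here.
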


\begin{theorem}\label{t4}
Let $L:E\to E$  be a positive linear operator, $\delta\in\K_+(E)$, and $T:X\to B(X)$ be a $(\delta, L)$-weak contraction satisfying condition (S). Suppose $E$ is $d$-complete and $\varphi_T\in\mathcal{LS}(X)$. Then,  there exists $x^*\in X$ such that $x^*\in Tx^*$.
\end{theorem}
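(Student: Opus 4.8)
The plan is to deduce this from the set-valued Caristi theorem, Theorem \ref{t2}, specifically from proposition (\ref{t2}.1), applied to a positive scalar multiple of $\varphi_T$. First I would fix the constants. Since $\delta\in\K_+(E)$, choose $0\le t<1$ with $\theta\preceq\delta z\preceq tz$ for all $z\in P$; then pick $\epsilon>0$ small enough that $q:=t(1+\epsilon)<1$ (possible because $t<1$), and set $c:=(1+\epsilon)/(1-q)>0$ and $\psi:=c\,\varphi_T$. Since multiplication by a positive scalar preserves lower semicontinuity (the sublevel set $\{x:\psi(x)\preceq\alpha\}$ equals $\{x:\varphi_T(x)\preceq\alpha/c\}$) and boundedness from below, we get $\psi\in\mathcal{LS}(X)$.

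Next I would invoke condition (S) for this $\epsilon$ to obtain a selector $f_\epsilon$ of $T$ with $d(x,f_\epsilon(x))\preceq(1+\epsilon)\,d(x,Tx)=(1+\epsilon)\,\varphi_T(x)$ for every $x\in X$, and check that $y:=f_\epsilon(x)\in Tx$ witnesses the Caristi condition. The two crucial observations are: (i) $d(y,Tx)=\theta$, since $y\in Tx$ forces $\theta\preceq d(y,Tx)\preceq d(y,y)=\theta$, so the Berinde term $L\,d(y,Tx)$ disappears from \eqref{C}; and (ii) $\varphi_T(y)=d(y,Ty)\preceq H(Tx,Ty)$, again because $y\in Tx$. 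Combining these with $\delta d(x,y)\preceq t\,d(x,y)$ and the selector bound gives
$$
\varphi_T(y)=d(y,Ty)\preceq H(Tx,Ty)\preceq\delta d(x,y)+L\,d(y,Tx)=\delta d(x,y)\preceq t\,d(x,y)\preceq q\,\varphi_T(x),
$$
whence $\varphi_T(x)-\varphi_T(y)\succeq(1-q)\,\varphi_T(x)$, and multiplying by $c>0$ and using $c(1-q)=1+\epsilon$,
$$
\psi(x)-\psi(y)=c(\varphi_T(x)-\varphi_T(y))\succeq c(1-q)\,\varphi_T(x)=(1+\epsilon)\,\varphi_T(x)\succeq d(x,f_\epsilon(x))=d(x,y).
$$
Thus for every $x\in X$ there is $y\in Tx$ with $d(x,y)\preceq\psi(x)-\psi(y)$, so proposition (\ref{t2}.1) applied to the pair $(T,\psi)$ yields $x^*\in X$ with $x^*\in Tx^*$.

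The main obstacle, and really the only delicate point, is the bookkeeping of constants forced by condition (S): the unavoidable factor $1+\epsilon$ in the selector estimate must be absorbed into $\psi$, which works precisely because $t<1$ lets us keep $q=t(1+\epsilon)<1$ for $\epsilon$ small. It is worth noting that $L$ is assumed only positive, with no smallness hypothesis, so the argument relies essentially on the vanishing of $L\,d(y,Tx)$ along the selector; this is exactly what collapses the weak-contraction inequality \eqref{C} into the genuinely contractive estimate $\varphi_T(y)\preceq q\,\varphi_T(x)$ that drives the whole proof.
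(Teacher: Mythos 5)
Your proof is correct and follows essentially the same route as the paper: invoke the selector $f_\epsilon$ from condition (S), observe that $d(f_\epsilon(x),Tx)=\theta$ kills the Berinde term $L\,d(y,Tx)$, derive the contractive estimate $\varphi_T(f_\epsilon(x))\preceq\delta d(x,f_\epsilon(x))$, and rescale $\varphi_T$ into a Caristi function fed to Theorem \ref{t2}/Corollary \ref{c1}. The only difference is that you rescale by the scalar $c=(1+\epsilon)/(1-t(1+\epsilon))$ whereas the paper uses the operator $\left(\frac{1}{1+\epsilon}-\delta\right)^{-1}$; your scalar bookkeeping is arguably cleaner, since it sidesteps having to justify that this operator inverse exists and is positive.
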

\begin{proof}
We have
$$
H(Tx,Ty) \preceq  \delta d(x,y) +Ld(y, T x)
, \quad\mbox{ for all } x,y\in X.
$$
Hence, for each $y\in Tx$, we have $H(Tx,Ty)\leq \delta d(x,y)$. Define $\varphi:X\to E$ as $\varphi(x)=\left(\frac{1}{1+\epsilon}-\delta\right)^{-1}\varphi_T(x)$, where $\epsilon>0$ is chosen in such a way that $\frac{1}{1+\epsilon}>\delta$. From assumption $\varphi\in\mathcal{LS}(X)$ and since $T$ satisfies condition (S), there exists a selector
$f_\epsilon$ or $T$ such that for each $x\in X$, $d(x,f_\epsilon(x))\preceq(1+\epsilon)d(x,Tx)$.
Hence,
$
d(f_\epsilon(x),Tf_\epsilon(x))\preceq H(Tx,Tf_\epsilon(x))\preceq \delta d(x,f_\epsilon(x))
$
and thus
$$
\left(\frac{1}{1+\epsilon}-\delta\right)d(x,f_\epsilon(x))\preceq d(x,Tx)-d(f_\epsilon(x),Tf_\epsilon(x)).
$$
Consequently, for each $x\in X$, $d(x,f_\epsilon(x))\preceq\varphi(x)-\varphi(f_\epsilon(x))$ and it follows from Corollary \ref{c1} that there exists $x^*\in X$ such that $x^*\in Tx^*$, which concludes the proof.
\end{proof}

\begin{corollary}\label{c3}
Suppose $E$ is $d$-complete and let $T:X\to B(X)$ be a contraction satisfying condition (S). Then,  there exists $x^*\in X$ such that $x^*\in Tx^*$.
\end{corollary}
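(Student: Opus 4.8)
The plan is to obtain Corollary \ref{c3} as an immediate specialization of Theorem \ref{t4}, so the work reduces to verifying that the hypotheses of Theorem \ref{t4} are met whenever $T$ is a contraction satisfying condition (S). First I would recall that, by definition, a contraction $T:X\to B(X)$ admits some $k\in\K_+(E)$ with $H(Tx,Ty)\preceq kd(x,y)$ for all $x,y\in X$. Taking $\delta=k$ and $L=0$ (the zero operator, which is trivially positive and linear), the contraction inequality becomes exactly
$$
H(Tx,Ty)\preceq \delta d(x,y)+Ld(y,Tx),\qquad\mbox{for all }x,y\in X,
$$
so $T$ is a $(\delta,L)$-weak contraction in the sense of \eqref{C}. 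Next, since $T$ is a contraction, Corollary \ref{c2} gives $\varphi_T\in\mathcal{LS}(X)$; this supplies the remaining structural hypothesis of Theorem \ref{t4}. The $d$-completeness of $E$ (equivalently of $X$ in the ambient setting) is assumed outright, and condition (S) is part of the hypothesis of the corollary.

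With these four facts in hand — $T$ is a $(\delta,L)$-weak contraction, $T$ satisfies condition (S), $E$ is $d$-complete, and $\varphi_T\in\mathcal{LS}(X)$ — Theorem \ref{t4} applies verbatim and yields the existence of $x^*\in X$ with $x^*\in Tx^*$, which is precisely the assertion of Corollary \ref{c3}. So the proof is essentially one paragraph: instantiate $\delta=k$, $L=0$, invoke Corollary \ref{c2} for lower semicontinuity, then quote Theorem \ref{t4}.

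I do not anticipate any genuine obstacle here; the only point requiring a moment's care is the bookkeeping check that $L=0$ is an admissible choice in the definition of $(\delta,L)$-weak contraction — the paper requires $L$ to be a positive linear operator, and the zero map qualifies since $0x=\theta\in P$ for every $x\in P$ — and that the term $Ld(y,Tx)$ then vanishes regardless of whether $d(y,Tx)$ is well-defined as an infimum, so no subtlety about the non-metric nature of $H$ intervenes. One might also remark, echoing Corollary \ref{c2}'s reliance on Proposition \ref{p1}, that a contraction is automatically $H$-continuous, but this is already absorbed into Corollary \ref{c2} and need not be re-derived.

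\begin{proof}
Since $T$ is a contraction, there exists $k\in\K_+(E)$ such that $H(Tx,Ty)\preceq kd(x,y)$ for all $x,y\in X$. Put $\delta=k$ and let $L:E\to E$ be the zero operator, which is a positive linear operator. Then
$$
H(Tx,Ty)\preceq \delta d(x,y)+Ld(y,Tx),\qquad\mbox{for all }x,y\in X,
$$
so $T$ is a $(\delta,L)$-weak contraction. By Corollary \ref{c2}, $\varphi_T\in\mathcal{LS}(X)$. As $E$ is $d$-complete and $T$ satisfies condition (S), all hypotheses of Theorem \ref{t4} are fulfilled, and therefore there exists $x^*\in X$ such that $x^*\in Tx^*$. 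This completes the proof.
\end{proof}
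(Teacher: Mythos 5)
Your proof is correct and follows exactly the paper's route: the paper's own proof is the one-line "It follows from Corollary \ref{c2} and Theorem \ref{t4}," and you have simply made explicit the instantiation $\delta=k$, $L=0$ that the paper leaves implicit. No issues.
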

\begin{proof}
It follows from Corollary \ref{c2} and Theorem \ref{t4}.
\end{proof}
\begin{corollary}\label{c4}
Suppose $E$ is $d$-complete and let $f:X\to X$ be a single valued contraction. Then,  there exists $x^*\in X$ such that $x^*= f(x^*)$.
\end{corollary}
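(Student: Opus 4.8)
The plan is to obtain this single-valued statement as the special case of the set-valued Corollary \ref{c3} in which the mapping $T$ has singleton values. Concretely, I would introduce $T\colon X\to B(X)$ by $Tx=\{f(x)\}$. Every singleton is a nonempty bounded subset of $X$, so $T$ is well defined with values in $B(X)$, and a point $x^*$ satisfies $x^*\in Tx^*$ precisely when $x^*=f(x^*)$; thus the conclusion of Corollary \ref{c3} applied to this $T$ is exactly the assertion to be proved.

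The first step is to check that $T$ is a contraction in the sense of Section 4. For any $x\in X$ and any singleton $\{z\}$ one has $d(x,\{z\})=\inf_{y\in\{z\}}d(x,y)=d(x,z)$, so the definition of $H$ together with Remark \ref{r2} gives $H(Tx,Ty)=\sup\{d(f(x),f(y)),d(f(y),f(x))\}=d(f(x),f(y))$ for all $x,y\in X$. Hence the hypothesis that $f$ is a contraction, i.e.\ that there is $k\in\K_+(E)$ with $d(f(x),f(y))\preceq kd(x,y)$ for all $x,y\in X$, says exactly that $H(Tx,Ty)\preceq kd(x,y)$, so $T$ is a contraction; and by Corollary \ref{c2} it follows that $\varphi_T\in\mathcal{LS}(X)$.

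The second step is to verify that $T$ satisfies condition (S). Given $\epsilon>0$, the mapping $f$ is itself a selector of $T$, and for every $x\in X$ we trivially have $d(x,f(x))=d(x,Tx)\preceq(1+\epsilon)d(x,Tx)$, so $f_\epsilon=f$ works. All hypotheses of Corollary \ref{c3} are therefore met, and that corollary yields $x^*\in X$ with $x^*\in Tx^*=\{f(x^*)\}$, that is, $x^*=f(x^*)$.

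I do not expect a genuine obstacle here: the only things requiring care are the bookkeeping that translates the single-valued vocabulary (contraction of $f$, fixed point of $f$) into the set-valued one, and the elementary observation that $H$ restricted to singletons coincides with $d$. If one prefers to bypass condition (S), one can instead apply Theorem \ref{t4} to the same $T=\{f(\cdot)\}$ with $\delta=k$ and $L$ the zero operator: the hypotheses there hold (with $\varphi_T(x)=d(x,f(x))$ and $\varphi_T\in\mathcal{LS}(X)$ by Corollary \ref{c2}), and the conclusion is again $x^*=f(x^*)$. Either route is short.
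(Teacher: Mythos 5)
Your proof is correct and is exactly the specialization the paper intends: the paper states Corollary \ref{c4} without proof as an immediate consequence of Corollary \ref{c3} (equivalently Theorem \ref{t4}) applied to $Tx=\{f(x)\}$, and your verification that $H$ reduces to $d$ on singletons and that condition (S) holds trivially with $f_\epsilon=f$ fills in precisely the routine details. Nothing further is needed.
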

\begin{remark}\label{r7}
Since the condition $d(x,Tx)=0$ does not imply, even if $Tx$ is closed, that $x\in Tx$, it is not possible, in the scenario of cone metric spaces, to prove existence of fixed point for weak contractions, as it was done by Berinde and Berinde in \cite{BB07} for set-valued mapping defined on standard metric spaces. Consequently, Corollary \ref{c1} was crucial in the proof of Theorem \ref{t4}.
\end{remark}

Some emblematic and particular cases of standard weak contractions are the Chatterjea \cite{Ch72} and Kannan \cite{Ka69} contractions. Natural extensions of these concepts are obtained for set-valued mappings defined on cone metric spaces. Corollary \ref{c5} below shows that, under usual conditions, these enjoy of existence of fixed points.
[A natural extension for this type of contractions is given in corollary below. \cite{NK10} \cite{HR73} \cite{Re72}]

\begin{corollary}\label{c5}
Suppose $E$ is $d$-complete and let $T:X\to B(X)$ be a set-valued mapping such that $\varphi_T\in\mathcal{LS}(X)$ and at least one of the following three conditions holds:
\begin{description}
\item[(\ref{c5}.1)] $H(Tx,Ty)\preceq\alpha[ d(x,Tx)+d(y,Ty)]$\quad(Kannan condition) and
\item[(\ref{c5}.2)] $H(Tx,Ty)\preceq \alpha[d(x,Ty)+d(y,Tx)]$\quad(Chatterjea condition),
\end{description}
where $\alpha:E\to E$ is a linear operator satisfying $2\alpha\in\mathcal{K}_+(E)$.

Then, there exists $x\in X$ such that $x\in T(x)$.
\end{corollary}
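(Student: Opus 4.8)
The plan is to reduce both the Kannan and the Chatterjea conditions to the situation already treated inside the proof of Theorem \ref{t4}: to produce a $\delta\in\mathcal{K}_+(E)$ for which $H(Tx,Ty)\preceq\delta\, d(x,y)$ holds whenever $y\in Tx$, and then to rerun the selector argument of that theorem.

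First I would record three elementary facts, valid for any $x\in X$ and any $y\in Tx$: (i) $d(y,Tx)=\theta$, because $d(y,y)=\theta$ belongs to $\{d(y,z):z\in Tx\}$ and every element of this set is $\succeq\theta$ (this uses nothing about $Tx$ being closed); (ii) $\varphi_T(y)=d(y,Ty)\preceq\sup_{a\in Tx}d(a,Ty)\preceq H(Tx,Ty)$; and (iii) $d(x,Tx)\preceq d(x,y)$. In the Kannan case, substituting (ii) and (iii) into $H(Tx,Ty)\preceq\alpha[d(x,Tx)+d(y,Ty)]$ and using that $\alpha$ is monotone (being positive and linear) gives $(I-\alpha)H(Tx,Ty)\preceq\alpha\, d(x,y)$. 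In the Chatterjea case, (i) annihilates the term $\alpha\, d(y,Tx)$; then bounding $d(x,Ty)\preceq d(x,y)+d(y,Ty)$ (triangle inequality, infimum over $Ty$) and applying (ii) again yields the same inequality $(I-\alpha)H(Tx,Ty)\preceq\alpha\, d(x,y)$.

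Next I would invert $I-\alpha$ on $P$. Since $2\alpha\in\mathcal{K}_+(E)$, there is $0\le t<1$ with $\theta\preceq\alpha u\preceq(t/2)u$ for all $u\in P$, whence $\theta\preceq\alpha^{k}u\preceq(t/2)^{k}u$; the partial sums $\sum_{k=0}^{n}\alpha^{k}u$ are increasing and bounded above by $(1-t/2)^{-1}u$, so by $\sigma$-order completeness they converge, and one verifies that the resulting positive operator is a two-sided inverse of $I-\alpha$. Put $\delta:=(I-\alpha)^{-1}\alpha=\sum_{k\ge1}\alpha^{k}$; the same estimate gives $\theta\preceq\delta u\preceq\frac{t}{2-t}\,u$ with $\frac{t}{2-t}<1$, and together with injectivity (inherited from $\alpha$) and continuity this places $\delta$ in $\mathcal{K}_+(E)$. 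Thus, under either hypothesis, $H(Tx,Ty)\preceq\delta\, d(x,y)$ whenever $y\in Tx$, and from here the proof of Theorem \ref{t4} applies essentially verbatim: pick $\epsilon>0$ with $\frac{1}{1+\epsilon}>\delta$, set $\varphi:=\left(\frac{1}{1+\epsilon}-\delta\right)^{-1}\varphi_T\in\mathcal{LS}(X)$, use condition (S) to choose a selector $f_\epsilon$ with $d(x,f_\epsilon(x))\preceq(1+\epsilon)d(x,Tx)$, combine this with $d(f_\epsilon(x),Tf_\epsilon(x))\preceq H(Tx,Tf_\epsilon(x))\preceq\delta\, d(x,f_\epsilon(x))$ to obtain $d(x,f_\epsilon(x))\preceq\varphi(x)-\varphi(f_\epsilon(x))$, and apply Corollary \ref{c1} to produce $x^*$ with $x^*=f_\epsilon(x^*)\in Tx^*$.

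The step I expect to be the genuine obstacle is the operator-theoretic one: showing that $I-\alpha$ is invertible with $(I-\alpha)^{-1}\alpha\in\mathcal{K}_+(E)$ using only that $E$ is a $\sigma$-order complete Riesz space --- in particular, securing continuity of the inverse without appealing to an open-mapping theorem, and matching the order estimate $\delta u\preceq\frac{t}{2-t}u$ to the precise form demanded by the definition of $\mathcal{K}_+(E)$. By contrast, facts (i)--(iii), the absorption of the $H(Tx,Ty)$ term, and the closing reduction to Corollary \ref{c1} (which is where condition (S) enters, just as in Theorem \ref{t4}) are routine.
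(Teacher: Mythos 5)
The paper offers no proof of Corollary \ref{c5}; it is stated as a consequence of the preceding machinery, so the natural intended argument is exactly the one you sketch: show that the Kannan/Chatterjea hypotheses yield $H(Tx,Ty)\preceq\delta\,d(x,y)$ for $y\in Tx$ and then rerun the selector argument of Theorem \ref{t4}. Your facts (i)--(iii) and the absorption of the $H(Tx,Ty)$ term are correct, and they correctly isolate the only place where the weak-contraction inequality is actually used in that proof (namely for $y\in Tx$). Two points, however.

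First, the genuine gap: your closing step invokes condition (S) to produce the selector $f_\epsilon$, but condition (S) is \emph{not} among the hypotheses of Corollary \ref{c5} as stated. It cannot be dispensed with: by Remark \ref{r7}, $d(x,Tx)=\theta$ does not force $x\in Tx$ in a cone metric space, so some selector-type hypothesis is indispensable to pass from ``$\varphi_T$ nearly vanishes along an orbit'' to an actual fixed point, and Theorem \ref{t4} itself assumes (S). You should either add condition (S) to the hypotheses (most likely the paper's intent, i.e.\ the statement is incomplete) or explain how you propose to manufacture a suitable selector from the Kannan/Chatterjea conditions alone; as written, the proof does not match the statement it claims to prove.

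Second, the step you single out as the main obstacle --- inverting $I-\alpha$ by a Neumann series in a merely $\sigma$-order complete Riesz space, and verifying continuity and injectivity of $(I-\alpha)^{-1}\alpha$ --- can be bypassed entirely. Since $2\alpha\in\mathcal{K}_+(E)$, there is $0\le t<1$ with $\theta\preceq\alpha u\preceq(t/2)u$ for $u\in P$; applying this to $u=H(Tx,Ty)\succeq\theta$ in your inequality $H(Tx,Ty)\preceq\alpha\,d(x,y)+\alpha H(Tx,Ty)$ gives $(1-t/2)H(Tx,Ty)\preceq\alpha\,d(x,y)\preceq(t/2)\,d(x,y)$, hence $H(Tx,Ty)\preceq\frac{t}{2-t}\,d(x,y)$ with $\frac{t}{2-t}<1$. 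Taking $\delta=\frac{t}{2-t}\,I$ (a scalar multiple of the identity, which is trivially positive, injective and continuous, hence in $\mathcal{K}_+(E)$) puts you directly into the setting of Theorem \ref{t4} without any operator inversion. This is both more elementary and safer, since continuity of a Neumann-series inverse is not guaranteed by the paper's standing assumptions on $E$.
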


\noindent\textbf{Acknowledgements} This research was partially supported by Chilean Council for Scientific and Technological Research, under
grant FONDECYT 1120879.

\providecommand{\bysame}{\leavevmode\hbox to 3em%
{\hrulefill}\thinspace}


\begin{thebibliography}{10}

\bibitem{AK11}
R.P. Agarwal and M.A. Khamsi.
\newblock Extension of {C}aristi's fixed point theorem to vector valued metric
  spaces.
\newblock {\em Nonlinear Analysis}, Vol. 74:141--145, 2011.

\bibitem{AT07}
C.D Aliprantis and R.~Tourky.
\newblock {\em Cones and Duality}.
\newblock American Mathematical Society, Providence, Rhode Island, 2007.

\bibitem{AR10}
I.~Altun and V.~Rako\u{c}evi\'{c}.
\newblock Ordered cone metric spaces and fixed point results.
\newblock {\em Computers and Mathematics with Applications}, Vol.
  60:1145--1151, 2010.

\bibitem{AM13}
A.~Azam and N.~Mehmood.
\newblock Multivalued fixed point theorems in cone tvs-cone metric spaces.
\newblock {\em Fixed Point Theory and Applications}, 2013:184, 2013.

\bibitem{AzEtAl13}
A.~Azam, N.~Mehmood, J.~Ahmad, and S.~Radenovi\'c.
\newblock Multivalued fixed point theorems in cone $b$-metric spaces.
\newblock {\em Journal of Inequalities and Applications}, 2013:582, 2013.

\bibitem{BB07}
M.~Berinde and V.~Berinde.
\newblock On a general class of multi-valued weakly {P}icard mappings.
\newblock {\em Journal of Mathematical Analysis and Applications}, Vol.
  326(2):772--782, 2007.

\bibitem{BP63}
E.~Bishop and R.R. Phelps.
\newblock The support functionals of a convex set.
\newblock {\em Proceedings of Symposia in Pure Mathematics VII, Convexity,
  American Mathematical Society}, pages 27--36, 1963.

\bibitem{Ch72}
S.K. Chatterjea.
\newblock Fixed-point theorems.
\newblock {\em C. R. Acad. Bulgare Sci.}, Vol. 25:727-730, 1972.

\bibitem{CB11}
S.H. Cho and J.S. Bae.
\newblock Fixed point theorems for multivalued maps in cone metric spaces.
\newblock {\em Fixed Point Theory and Applications}, page~87, 2011.

\bibitem{HZ07}
L.G. Huang and X.~Zhang.
\newblock Cone metric spaces and fixed point theorems of contractive mappings.
\newblock {\em Journal of Mathematical Analysis and Applications}, Vol.
  332:1468-1476, 2007.

\bibitem{Ka69}
R.~Kannan.
\newblock Some results on fixed points ii.
\newblock {\em Fundamenta Mathematicae}, Vol. 76(4):405--408, 1969.

\bibitem{MeEtAl15}
N.~Mehmood, A.~Azam, and L.D.R. Ko\u{c}inac.
\newblock Multivalued fixed point results in cone metric spaces.
\newblock {\em Topology and its Applicatons}, Vol. 179:156--170, 2015.

\bibitem{NK10}
K.~Neammanee and A.~Kaewkhao.
\newblock Fixed point theorems of multi-valued {Z}amfirescu mapping.
\newblock {\em Journal of Mathematucal Research}, Vol. 2(2):150--156, 2010.

\bibitem{Re72}
S.~Reich.
\newblock Fixed points in locally convex spaces.
\newblock {\em Mathematische Zeitschrift}, Vol. 125(1):17--31, 1972.

\bibitem{ShEtAl12}
W.~Shatanawi, V.C. Raji\'c, and A.~Al-Rawashdeh.
\newblock {M}izoguchi-{T}akahashi-type theorems in tvs-cone metric spaces.
\newblock {\em Fixed Point Theory and Applications}, Vol. 2012:106, 2012.

\bibitem{Ta91}
W.~Takahashi.
\newblock Existence theorems generalizing fixed point theorems for multivalued
  mappings.
\newblock In M.A. Th\'era and J.B. Baillon, editors, {\em Fixed Point Theory
  and Applications, in: Pitman Research Notes in Mathematics Series, vol.
  252.}, pages 397--406. Longman Sci. Tech., Harlow, 1991.

\end{thebibliography}

\end{document}